\begin{document}

\title{Injective Hulls In a Locally Finite Topos}
\titlerunning{Locally Finite Topoi}  
%
\author{Felix Dilke\inst{1}}
\authorrunning{Felix Dilke} 
%
\tocauthor{Felix Dilke}
\institute{SpringerNature,\\
\email{fdilke@gmail.com},\\ WWW home page:
\texttt{http://github.com/fdilke/bewl}
}

\maketitle              

\begin{abstract}
We show that in a locally finite topos, every object has an essential extension that is injective, and that this extension is unique up to isomorphism.

The construction was motivated by work on Bewl, a software project for doing topos-theoretic calculations.
\keywords{topos, category theory, injective}
\end{abstract}
\section{Motivation}
These results emerged from trying to construct injective hulls in software using Bewl (\url{http://github.com/fdilke/bewl}), a software domain-specific language for topos theory based on the Scala programming language. 

It seemed straightforward to use iterative algorithms to construct a minimal injective extension for any object. The question was how this can be theoretically justified. Of course, because topoi in Bewl are modelled on a finite computer, it is more or less enforced that they all satisfy the condition of local finiteness. 

So the results below show that this condition alone ensures that injective hulls exist in such a topos.
\section{Preliminaries}
\subsection{Locally finite categories}
Let $\mathcal{C}$ be a category which is $\textit{locally finite}$, i.e. in which for any two objects A, B there are only finitely many arrows A $\rightarrow$ B. 

\begin{lemma} \label{endo:monic}
Let A $\in \mathcal{C}$, $ f:A \rightarrow A $ monic. Then $f$ is an automorphism of A. 
\end{lemma}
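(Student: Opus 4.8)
The plan is to exploit local finiteness to turn the problem into a finite counting argument, then use monicity as a cancellation rule. First I would note that, since $\mathcal{C}$ is locally finite, the single hom-set $\mathcal{C}(A,A)$ of endomorphisms of $A$ is finite. I would then consider the sequence of powers $f, f^2, f^3, \dots$, all of which lie in this one finite set. By the pigeonhole principle there must exist indices $m \geq 1$ and $k \geq 1$ with $f^{m} = f^{m+k}$.

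The second step is cancellation. Since $f$ is monic and any composite of monics is monic, $f^{m}$ is itself monic. Writing $f^{m+k} = f^{m} \circ f^{k}$ and comparing it with $f^{m} = f^{m} \circ \mathrm{id}_{A}$, the equation $f^{m} \circ \mathrm{id}_{A} = f^{m} \circ f^{k}$ lets me cancel $f^{m}$ on the left (this is exactly what monicity provides), yielding $f^{k} = \mathrm{id}_{A}$. Because $k \geq 1$, the arrow $g = f^{k-1}$ then satisfies $f \circ g = g \circ f = \mathrm{id}_{A}$, so $g$ is a two-sided inverse of $f$ and $f$ is an automorphism.

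I do not expect a genuine obstacle, as the argument is short; the two points needing care are that local finiteness is invoked only for the finiteness of $\mathcal{C}(A,A)$ (not of the whole category), and that the cancellation is performed on the correct side, since ``monic'' means precisely that $f^{m}$ may be cancelled when it occurs as the left factor of a composite. As an alternative packaging of the same idea, I could instead observe that left composition $L_{f}\colon \mathcal{C}(A,A) \to \mathcal{C}(A,A)$, $g \mapsto f \circ g$, is injective because $f$ is monic; an injective self-map of a finite set is surjective, so there is some $g$ with $f \circ g = \mathrm{id}_{A}$, and then $f \circ g \circ f = f = f \circ \mathrm{id}_{A}$ gives $g \circ f = \mathrm{id}_{A}$ after cancelling the left $f$, again exhibiting $f$ as an automorphism.
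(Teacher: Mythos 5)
Your proof is correct and follows essentially the same route as the paper: both use local finiteness to find a repetition $f^{m} = f^{m+k}$ among the powers of $f$ in the finite hom-set $\mathcal{C}(A,A)$, cancel the monic $f^{m}$ to get $f^{k} = \mathrm{id}_{A}$, and exhibit $f^{k-1}$ as a two-sided inverse. Your alternative packaging via the injective self-map $L_{f}$ of a finite set is a pleasant rephrasing but encodes the same idea.
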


\begin{proof}
Since all powers $f^n : A \rightarrow A$ lie in the finite set $Hom(A, A)$, we can find $m,  n \geq 0 \in  \bbbn $ with $f^{m+n+1} = f^m$. Since $f^m$ is monic, $f^{n + 1} = 1_A$ and so ${f^n}$ is a two-sided inverse of $f$. $\qed$
\end{proof}

Applying this result to the dual category $\mathcal{C}^{op}$, which is also locally finite, we note
\begin{corollary} \label{endo:epic}
Any epic endomorphism of a $\mathcal{C}$-object is iso. \qed
\end{corollary}

We also deduce a version of the Schr\"oder-Bernstein theorem for $\mathcal{C}$:

\begin{theorem} \label{schro:ben}
Let A, B $\in \mathcal{C}$ with monics $f:A \rightarrow B$, $g:B \rightarrow A$. Then f, g are iso. In particular, $A \cong B$. 
\end{theorem}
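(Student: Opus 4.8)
The plan is to reduce the Schröder–Bernstein statement to the two results already established: Lemma~\ref{endo:monic} and Corollary~\ref{endo:epic}. The key observation is that composing the two given monics produces endomorphisms whose properties we already understand.

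So the strategy works like this. Since monics compose, I would first form the composite $g \circ f : A \rightarrow A$. Being a composite of two monics, this is itself a monic endomorphism of $A$, so by Lemma~\ref{endo:monic} it is an automorphism of $A$. Symmetrically, $f \circ g : B \rightarrow B$ is a monic endomorphism of $B$, hence also an automorphism, in particular iso.

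Now I would extract the desired conclusions about $f$ and $g$ individually from the fact that both composites are isomorphisms. From $g \circ f$ being iso, $f$ has a left inverse (namely $(g \circ f)^{-1} \circ g$), so $f$ is split monic; and from $f \circ g$ being iso, $f$ has a right inverse (namely $g \circ (f \circ g)^{-1}$), so $f$ is split epic. An arrow that is both split monic and split epic is an isomorphism, so $f$ is iso. The argument for $g$ is entirely symmetric. Once $f$ is shown to be iso the conclusion $A \cong B$ is immediate.

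I expect the only real point requiring care is the bookkeeping of left and right inverses — verifying that the two composites really do supply a two-sided inverse for $f$, rather than slipping into a circular argument. There is no genuine obstacle here, since everything follows formally from the associativity of composition; the main thing is to present the inverse cleanly. An alternative route would be to invoke Corollary~\ref{endo:epic} as well: once $f$ is split epic it is in particular epic, and being a split mono it is also monic, but the cleanest finish is simply to note that an arrow with a left inverse and a right inverse has them coincide and is therefore iso.
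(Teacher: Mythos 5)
Your proposal is correct and is essentially the paper's own proof: both apply Lemma~\ref{endo:monic} to the monic endomorphisms $gf$ and $fg$ to conclude they are iso, then observe that $f$ (and symmetrically $g$) is thereby both left- and right-invertible, hence iso. Your version merely writes out the explicit inverses, which the paper leaves implicit.
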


\begin{proof}
By the lemma, the monics $fg$ and $gf$ are both iso, so $f$ is both left- and right-invertible, hence iso. Similarly for $g$. $\qed$
\end{proof}

\begin{lemma}
Let $A_0 \rightarrow A_1 \rightarrow \cdots \rightarrow A_n$ be a sequence of monics in $\mathcal{C}$ whose endpoints are isomorphic, $A_0 \cong A_n$. Then each arrow in the sequence is iso. 
\end{lemma}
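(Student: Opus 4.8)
The plan is to compose the monics to form a single monic from $A_0$ to $A_n$, and then use the isomorphism hypothesis to set up two monics going in opposite directions between isomorphic objects, so that the Schröder-Bernstein theorem (Theorem \ref{schro:ben}) can be applied. Let me denote the arrows in the sequence by $f_i : A_{i-1} \rightarrow A_i$ for $i = 1, \ldots, n$, each of which is monic by hypothesis. Since a composite of monics is monic, the full composite $F = f_n \circ \cdots \circ f_1 : A_0 \rightarrow A_n$ is monic. Separately, the hypothesis $A_0 \cong A_n$ supplies an isomorphism, and in particular a monic, $h : A_n \rightarrow A_0$.

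With these two monics $F : A_0 \rightarrow A_n$ and $h : A_n \rightarrow A_0$ in hand, Theorem \ref{schro:ben} applies directly and tells us that $F$ is itself an isomorphism. The key observation is then that if a composite of monics is iso, each factor must be iso: I would argue that since $F = f_n \circ (f_{n-1} \circ \cdots \circ f_1)$ is iso, the first factor $f_n$ is right-invertible (it has a right inverse given by $F^{-1}$ composed with the tail), hence epic; being both monic and epic on an endomorphism situation is not quite enough in a general category, so I instead observe that $f_n$ is a monic with a right inverse, which makes it split epic, and a split epic monic is automatically iso. The same reasoning, splitting off factors from the other end or regrouping the composite, shows each $f_i$ is iso.

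To make the "each $f_i$ is iso" step uniform, I would fix any index $i$ and regroup the iso composite as $F = (f_n \circ \cdots \circ f_{i+1}) \circ f_i \circ (f_{i-1} \circ \cdots \circ f_1)$; writing $L$ for the left block and $R$ for the right block, both are monic, and $F = L \circ f_i \circ R$ is iso. Then $f_i \circ R$ has the left inverse $F^{-1} \circ L$ (so $f_i$ is split... carefully: $f_i$ inherits a left inverse, making it split monic already known, and $R$ inherits a right inverse), while $L \circ f_i$ has a right inverse, giving $f_i$ a right inverse too. Having both a left and a right inverse forces $f_i$ to be iso.

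The main obstacle I anticipate is purely bookkeeping: ensuring that the one-sided inverses are assigned to the correct factor and that I genuinely extract a two-sided inverse for each individual $f_i$ rather than merely for a block. A cleaner alternative that sidesteps some of this is to prove it by a short induction on $n$: the base case $n = 0$ (or $n = 1$) is trivial, and for the inductive step one shows the last arrow $f_n$ is iso (via Schröder-Bernstein applied to the monic composite and the given isomorphism, as above), after which $A_0 \cong A_{n-1}$ follows and the induction hypothesis finishes the shorter sequence $A_0 \rightarrow \cdots \rightarrow A_{n-1}$. I would likely present the induction, since it localizes the Schröder-Bernstein application to one arrow at a time and keeps the inverse-chasing minimal.
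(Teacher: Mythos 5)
Your proof, in the induction form you say you would present, is correct, but it takes a genuinely different route from the paper's. The paper reduces to the case $n=2$ (with no further justification of that reduction) and then argues directly from Lemma~\ref{endo:monic}: given monics $p: A_0 \rightarrow A_1$, $q: A_1 \rightarrow A_2$ and an isomorphism $r: A_2 \rightarrow A_0$, the composite $qpr$ is a monic endomorphism of $A_2$, hence iso; writing $s$ for its inverse, the identity $qprs = 1_{A_2}$ is composed with $q$ and $q$ is cancelled on the left to get $prsq = 1_{A_1}$, so $q$ is iso and then $p$ is. You instead compose the whole chain into a single monic $F: A_0 \rightarrow A_n$, play it against the monic supplied by the isomorphism $A_n \cong A_0$ via Schr\"oder--Bernstein (Theorem~\ref{schro:ben}) to conclude $F$ is iso, and then peel off the last factor: $f_n$ is monic and has a right inverse (the tail composed with $F^{-1}$), hence is a monic split epi, hence iso; this gives $A_0 \cong A_{n-1}$ and induction finishes. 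Since Theorem~\ref{schro:ben} is itself proved from Lemma~\ref{endo:monic}, neither argument is more general, but yours buys two things: the passage from the two-arrow case to general $n$, which the paper waves through with ``it is enough to prove this for $n=2$,'' becomes an explicit induction, and the inverse-chasing is replaced by the single clean fact that a monic with a right inverse is iso. The paper's version buys independence from Theorem~\ref{schro:ben}.

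One caution: your ``uniform regrouping'' paragraph, as written, misassigns the one-sided inverses, exactly the bookkeeping hazard you flagged. From $F = L \circ f_i \circ R$ iso, the left inverse $F^{-1}L$ of $f_i R$ exhibits $R$ (not $f_i$) as split monic, and the right inverse $RF^{-1}$ of $Lf_i$ exhibits $L$ (not $f_i$) as split epic; neither hands $f_i$ a one-sided inverse directly. Your decision to present the induction instead is what keeps the proof sound. (If you wanted to salvage the regrouping, the repair is: $L$ is monic and split epic, hence iso; then $f_i R = L^{-1}F$ is iso; then $f_i$ is monic and split epic, hence iso.)
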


\begin{proof}
It is enough to prove this for $n=2$. 

So suppose $p: A_0 \rightarrow A_1, q: A_1 \rightarrow A_2$ are monic, $r: A_2 \rightarrow A_0$ an isomorphism. Then $qpr$ is an monic endomorphism of $A_2$, hence an isomorphism by Lemma $\ref{endo:monic}$. Let $s$ be its inverse. Then $qprs = 1_{A_2}$. But then $qprsq = q$, so cancelling $q$ from the left, $prsq = 1_{A_1}$. It follows that $q$, $prs$ are mutual two-sided inverses. Hence $q$ is iso, and since $r$, $s$ are both iso, it also follows that $p$ is iso. \qed
\end{proof}

Dually, we have

\begin{lemma}
Let $A_0 \rightarrow A_1 \rightarrow \cdots \rightarrow A_n$ be a sequence of epimorphisms in $\mathcal{C}$ whose endpoints are isomorphic, $A_0 \cong A_n$. Then each arrow in the sequence is iso. \qed
\end{lemma}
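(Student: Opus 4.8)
The plan is to obtain this lemma by pure duality, since it is the exact dual of the preceding statement about sequences of monics. The key observation is that $\mathcal{C}^{op}$ is again locally finite, because $\mathrm{Hom}_{\mathcal{C}^{op}}(A,B) = \mathrm{Hom}_{\mathcal{C}}(B,A)$ is finite for all $A, B$, so every result proved for an arbitrary locally finite category transfers verbatim to $\mathcal{C}^{op}$.

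Concretely, I would start from a sequence of epimorphisms $A_0 \to A_1 \to \cdots \to A_n$ in $\mathcal{C}$ with $A_0 \cong A_n$ and reinterpret it in $\mathcal{C}^{op}$. Reversing all the arrows gives a chain $A_n \to A_{n-1} \to \cdots \to A_0$ in $\mathcal{C}^{op}$; each of these reversed arrows is monic in $\mathcal{C}^{op}$ precisely because the original was epic in $\mathcal{C}$, and the endpoints $A_n$ and $A_0$ remain isomorphic, since an isomorphism in $\mathcal{C}$ is automatically an isomorphism in $\mathcal{C}^{op}$. The hypotheses of the preceding lemma, applied to the locally finite category $\mathcal{C}^{op}$, are thus met, and it yields that each reversed arrow is iso in $\mathcal{C}^{op}$. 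As a $\mathcal{C}^{op}$-isomorphism is the same data as a $\mathcal{C}$-isomorphism, each original arrow is iso in $\mathcal{C}$, as required.

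Alternatively, one can mirror the direct proof step by step, replacing Lemma \ref{endo:monic} by Corollary \ref{endo:epic}. It again suffices to treat $n = 2$: given epimorphisms $p : A_0 \to A_1$ and $q : A_1 \to A_2$ together with an isomorphism $r : A_2 \to A_0$, the composite $rqp$ is an epic endomorphism of $A_0$, hence iso with some inverse $s$. From $srqp = 1_{A_0}$ one composes with $p$ on the left and cancels the epimorphism $p$ on the right to obtain $psrq = 1_{A_1}$; thus $q$ is split monic and epic, hence iso, after which $p = q^{-1} r^{-1}(rqp)$ is a composite of isomorphisms.

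I do not expect any genuine obstacle here, as the content is entirely formal. The only point requiring a moment's care is the bookkeeping of the dualization: checking that \emph{epic} in $\mathcal{C}$ becomes \emph{monic} in $\mathcal{C}^{op}$, that the direction of the chain and the placement of the endpoint isomorphism are tracked correctly, and that both local finiteness and iso-ness are self-dual. For this reason I would favour the duality argument for brevity, keeping the direct computation in reserve should a fully self-contained statement be preferred.
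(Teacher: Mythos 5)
Your duality argument is exactly the paper's own proof: the paper states this lemma with only the remark ``Dually, we have,'' relying precisely on the fact that $\mathcal{C}^{op}$ is locally finite and that the preceding lemma on monics applies there. Your bookkeeping of the dualization is correct (and your backup direct computation via Corollary~\ref{endo:epic} also checks out), so there is nothing to fix.
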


\subsection{Locally finite topoi}

Now let $\mathcal{E}$ be a locally finite topos. We shall need some basic results about sequences of monics and epics in $\mathcal{E}$.

\begin{lemma} \label{finite:images}
Each $A \in \mathcal{E}$ has only finitely many isomorphism classes of epimorphic images. More precisely, there is a finite set $\{ I_i \in \mathcal{E} \}_{i = 0}^{n}$ such that for any epi $f: A \rightarrow I$, we have some $I_i \cong I$.
\end{lemma}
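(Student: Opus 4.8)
The plan is to exploit the subobject classifier $\Omega$ together with local finiteness to bound the number of quotients of $A$. The key point is that an epimorphic image of $A$ is pinned down, up to isomorphism, by a single piece of data: its kernel pair, regarded as a subobject of $A \times A$. Here I would use that $\mathcal{E}$, being a topos, is (Barr-)exact, so that every epimorphism $f : A \rightarrow I$ is the coequalizer of its own kernel pair $R \hookrightarrow A \times A$ (the pullback of $f$ against itself). Since a coequalizer is determined up to unique isomorphism by the parallel pair it coequalizes, the quotient object $(I, f)$ — and in particular the isomorphism class of $I$ — depends only on the subobject $R$. The assignment $f \mapsto R$ therefore induces an injection from the set of quotients of $A$ (epis out of $A$, taken up to isomorphism over $A$) into the set of subobjects of $A \times A$.

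It then remains to count those subobjects. Every subobject of $A \times A$ is classified by a unique characteristic map $A \times A \rightarrow \Omega$, so the subobjects of $A \times A$ are in bijection with $Hom(A \times A, \Omega)$. By local finiteness this hom-set is finite, so $A \times A$ has only finitely many subobjects; hence only finitely many kernel pairs can occur, and $A$ has only finitely many quotients up to isomorphism. Picking one object $I_i$ from each isomorphism class that actually arises yields the finite list $\{I_0, \dots, I_n\}$, and by construction every epi $f : A \rightarrow I$ satisfies $I \cong I_i$ for some $i$.

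The main obstacle is the appeal to the exactness of a topos — the fact that every epimorphism is effective, i.e. is the coequalizer of its kernel pair. This is precisely what makes the kernel pair recover the image rather than being a mere invariant; without it the map $f \mapsto R$ need not be injective on quotients. I would cite this as a standard property of topoi rather than reprove it. The remaining steps are routine bookkeeping: the correspondence between subobjects and their characteristic maps, and the (count-decreasing) passage from quotient objects down to isomorphism classes of the underlying objects.
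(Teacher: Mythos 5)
Your proposal is correct and follows essentially the same route as the paper: both arguments rest on the fact that every epi in a topos is the coequalizer of its kernel pair (the paper cites Moerdijk--MacLane, Theorem IV.7.8), view the kernel pair as a subobject of $A \times A$, and then bound the number of such subobjects by the finite hom-set $Hom(A \times A, \Omega)$ via the subobject classifier. The only cosmetic difference is that you phrase the effectiveness of epis in the language of exactness and make the injectivity of the map from quotients to kernel pairs explicit, which the paper leaves implicit.
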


\begin{proof}
We recall that in a topos every epi is the coequalizer of its kernel pair (theorem IV.7.8 in \cite{moer:mac}), the kernel pair being a pair of jointly monic arrows $p, q: K \rightarrow A$ for some $K$, or equivalently a subobject $K \rightarrow A \times A$. But each such subobject is determined up to isomorphism by its characteristic arrow $A \times A \rightarrow \Omega$, and there are only finitely many of these.

So the codomain $B$ of any epi $A \rightarrow B$ can be recovered up to isomorphism as the coequalizer of one of these finitely many possible kernel pairs. \qed
\end{proof}

\begin{theorem} \label{seq:epic}
Let $A_{n \in \bbbn} \in \mathcal{E}$ be a sequence of objects, $f_{n \in \bbbn}$ a sequence of epis with each $f_n: A_n \rightarrow A_{n+1}$. Then all $f_n$ are iso from some point on.
\end{theorem}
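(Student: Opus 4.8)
The plan is to combine Lemma~\ref{finite:images} with the (unlabelled) dual lemma on sequences of epis whose endpoints are isomorphic, using a pigeonhole argument to bridge between them. First I would note that for each $n$ the composite $A_0 \to A_n$ is a composite of the epis $f_0, \dots, f_{n-1}$ and hence itself epic, so every $A_n$ is an epimorphic image of the single fixed object $A_0$. By Lemma~\ref{finite:images} only finitely many isomorphism classes occur among the $A_n$, so the map $n \mapsto [A_n]$ sends the infinite index set to a finite set and, by the infinite pigeonhole principle, at least one class recurs infinitely often: there is an object $C$ and a strictly increasing sequence of indices $n_0 < n_1 < n_2 < \cdots$ with $A_{n_k} \cong C$ for every $k$.

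Next I would exploit these recurrences block by block. For each $k$ the finite run of epis $A_{n_k} \to A_{n_k+1} \to \cdots \to A_{n_{k+1}}$ has isomorphic endpoints, since both $A_{n_k}$ and $A_{n_{k+1}}$ are isomorphic to $C$. The lemma on epimorphic sequences with isomorphic endpoints then forces \emph{every} arrow in this block to be iso, i.e.\ each of $f_{n_k}, f_{n_k+1}, \dots, f_{n_{k+1}-1}$ is an isomorphism. (One could instead invoke Corollary~\ref{endo:epic} on the composite $A_{n_k}\to A_{n_{k+1}}\cong A_{n_k}$, but the cited lemma delivers the intermediate arrows directly.)

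Finally I would assemble the blocks to cover a whole tail. Because the $n_k$ increase without bound, every index $m \ge n_0$ lies in exactly one half-open interval $[n_k, n_{k+1})$, and the previous step shows $f_m$ is iso for each such $m$; the blocks tile $\{m : m \ge n_0\}$ with no gaps. Hence all $f_n$ are iso for $n \ge n_0$, which is precisely the assertion.

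I expect the main obstacle to be exactly this last assembling step rather than any categorical subtlety: a naive single application of pigeonhole only yields one repeated pair $A_i \cong A_j$ and hence isomorphisms on one finite stretch, which says nothing about a tail of the sequence. The essential trick is to pick a class that recurs \emph{infinitely} often, so that the successive occurrences partition the tail and leave no index uncovered. Everything else reduces to direct appeals to the cited results, and no topos-theoretic input beyond Lemma~\ref{finite:images} is required.
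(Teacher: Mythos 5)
Your proof is correct and follows essentially the same route as the paper: apply Lemma~\ref{finite:images} to see that only finitely many isomorphism classes occur among the $A_n$, use the (infinite) pigeonhole principle to extract indices $n_0 < n_1 < \cdots$ with all $A_{n_k}$ isomorphic, and then apply the dual lemma on epimorphic sequences with isomorphic endpoints to each block $A_{n_k} \to \cdots \to A_{n_{k+1}}$, so that the blocks cover the whole tail $n \ge n_0$. Your explicit remark that the repeated class must recur \emph{infinitely} often (a single repetition would not suffice) is exactly the point the paper's terser wording relies on.
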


\begin{proof}
By Lemma $\ref{finite:images}$, we can find a finite set $\{ I_i \}_{i = 0}^{n}$ of representatives for isomorphism classes of epimorphic images of $A_0$. But then every $A_j$ is isomorphic to some $I_i$. By the pigeon-hole principle, we can find a sequence ${n_0 < n_1 < ... \in \bbbn}$ such that all $A_{n_k}$ are isomorphic. But then each subsequence of epis $A_{n_k} \rightarrow \cdots \rightarrow A_{n_{k+1}}$ satisfies the hypotheses of Lemma 3, and so each of its component arrows is iso. 
This shows that all $f_n$ are isomorphisms for $n \geq n_0$. \qed
\end{proof}

\begin{theorem}
Let $E \in \mathcal{E}$, $A_0 \rightarrow A_1 \rightarrow \cdots$ a sequence of monics where each $A_n$ is a subobject of $E$. Then each arrow in the sequence is iso from some point on.
\end{theorem}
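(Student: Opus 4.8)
The plan is to mirror the proof of the preceding theorem (Theorem \ref{seq:epic}), replacing the finiteness of epimorphic images of a fixed object by the finiteness of subobjects of the fixed object $E$. The one ingredient I need in place of Lemma \ref{finite:images} is that $E$ has, up to isomorphism, only finitely many subobjects; granting that, the argument reduces to a pigeon-hole over the objects $A_n$ followed by an application of the lemma on chains of monics with isomorphic endpoints (Lemma 2).

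First I would establish the finiteness statement. In any topos, subobjects of $E$ are classified by their characteristic arrows, so there is a bijection between the subobject lattice $\mathrm{Sub}(E)$ and the hom-set $Hom(E, \Omega)$. Since $\mathcal{E}$ is locally finite, $Hom(E, \Omega)$ is finite, whence $\mathrm{Sub}(E)$ is finite. A fortiori there are only finitely many isomorphism classes among the domains of monics into $E$, so I may fix a finite set of representatives $\{ S_i \}_{i=0}^{m}$. I would note that the argument only uses that each $A_n$ is isomorphic to some $S_i$: I do not need the monics $A_n \rightarrow A_{n+1}$ to be compatible with the chosen embeddings of the $A_n$ into $E$, so the statement goes through even though no such compatibility is assumed.

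Next, since each $A_n$ is a subobject of $E$, each $A_n$ is isomorphic to one of the finitely many $S_i$. By the pigeon-hole principle there is an infinite subsequence $n_0 < n_1 < \cdots$ with all $A_{n_k}$ mutually isomorphic. For each $k$, the finite chain of monics $A_{n_k} \rightarrow A_{n_k + 1} \rightarrow \cdots \rightarrow A_{n_{k+1}}$ then has isomorphic endpoints, so by Lemma 2 every arrow occurring in it is iso. As $k$ varies, these chains exhaust all arrows $A_n \rightarrow A_{n+1}$ with $n \geq n_0$, which gives the conclusion: every arrow in the sequence is iso from $A_{n_0}$ onward.

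I expect the only real content to be the finiteness of $\mathrm{Sub}(E)$, and even this is immediate from the subobject classifier together with local finiteness, so it is less delicate than Lemma \ref{finite:images} (where one had to pass through kernel pairs and coequalizers). Everything after that is the same pigeon-hole-plus-chain argument already used for the epic case, so I anticipate no serious obstacle.
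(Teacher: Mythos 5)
Your proposal is correct and follows essentially the same route as the paper: finiteness of $\mathrm{Sub}(E)$ (via classifying arrows $E \rightarrow \Omega$ and local finiteness), a pigeon-hole argument to extract a subsequence of mutually isomorphic $A_{n_k}$, and then the lemma on finite chains of monics with isomorphic endpoints applied to each segment. The paper's own proof merely compresses this by saying ``argue as in the proof of Theorem~\ref{seq:epic}''; your version just makes explicit the details it leaves implicit, including the (correct) observation that no compatibility between the monics $A_n \rightarrow A_{n+1}$ and the embeddings into $E$ is needed.
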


\begin{proof}
Since $E$ has only finitely many isomorphism classes of subobjects, we can again find a sequence ${n_0 < n_1 < ... \in \bbbn}$ such that all $A_{n_k}$ are isomorphic. We can then argue as in the proof of Theorem $\ref{seq:epic}$. \qed
\end{proof}

\subsection {Injective objects in a topos}
We recall that in any topos, the subobject classifier $\Omega$ is injective, and the exponential object $B^A$ is injective whenever $B$ is injective. Hence for any $A$, $\Omega^A$ is injective, and the singleton map $\{\}:A \rightarrow \Omega^A$ therefore provides an embedding of $A$ into an injective object.

Note also that any retract of an injective object is injective.

\subsection {Essential extensions in a topos}
We recall that a monic $e: A \rightarrow B$ is $\textit{essential}$, or an $\textit{essential extension}$, if for every $C \in \mathcal{E}$ and $g: B \rightarrow C$, $ge$ monic implies $g$ monic. 
Clearly a composite of essential extensions is essential. 

We shall also need an alternative description in terms of epimorphisms, which relies on the fact that any arrow in a topos can be expressed as a product of a monic and an epic (\cite{mclarty} theorem 16.4).
\begin{lemma} \label{ess:epi}
In a topos, a monic $e: A \rightarrow B$ is essential iff for any epic $g: B \rightarrow C$, $ge$ monic $\Rightarrow$ $g$ iso.
\end{lemma}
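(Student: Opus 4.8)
The plan is to prove the biconditional in Lemma~\ref{ess:epi} by proving each direction separately, using the monic–epic factorization available in any topos.

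\emph{Forward direction.} Suppose $e: A \rightarrow B$ is essential, and let $g: B \rightarrow C$ be epic with $ge$ monic. By the definition of essentiality applied directly to $g$, I conclude that $g$ is monic. But $g$ is already assumed epic, and in a topos a monic epimorphism is an isomorphism. Hence $g$ is iso. This direction is essentially immediate from the definitions together with the standard topos fact that monic $+$ epic $\Rightarrow$ iso.

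\emph{Reverse direction.} This is the substantive direction and where I expect the main work to lie. Assume that for every epic $g: B \rightarrow C$ with $ge$ monic, $g$ is iso; I must show $e$ is essential. So let $h: B \rightarrow D$ be an arbitrary arrow with $he$ monic, and aim to show $h$ is monic. The idea is to factor $h$ through its image: write $h = m \circ g$ where $g: B \rightarrow C$ is epic and $m: C \rightarrow D$ is monic. Then $he = m g e$, and since $he$ is monic and $m$ is monic, I need to extract that $ge$ is monic. The key observation is that a composite $ge$ is monic whenever $mge = he$ is monic, because any left factor of a monic is monic (if $xy$ is monic then $y$ is monic). Applying this with $x = m$ and $y = ge$ gives that $ge$ is monic. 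Now the hypothesis applies to the epic $g$: since $ge$ is monic, $g$ is iso. Therefore $h = mg$ is a composite of a monic and an isomorphism, hence monic, as required.

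The main obstacle, and the step deserving the most care, is the cancellation argument that $ge$ is monic: I must invoke precisely that a right factor of a monic is itself monic (from $mge$ monic and the fact that $m$ appears on the left, conclude $ge$ monic), which is a standard but easy-to-misstate property of monics. Everything else follows formally once the factorization $h = mg$ is in hand, so the real content is simply knowing that the factorization exists (guaranteed by \cite{mclarty} theorem~16.4) and chaining the monic-cancellation facts correctly.
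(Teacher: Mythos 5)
Your proposal is correct and follows essentially the same route as the paper: both directions use the same ingredients, with the reverse direction factoring the arbitrary arrow as an epic followed by a monic, cancelling to get that the epic part composed with $e$ is monic, and then invoking the hypothesis to conclude it is iso. One small wording slip to fix: in your middle paragraph you call the needed cancellation ``any left factor of a monic is monic,'' though the fact you state symbolically (if $xy$ is monic then $y$ is monic) and use --- and correctly name as right-factor cancellation in your final paragraph --- concerns the right factor.
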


\begin{proof}
$\Rightarrow$: Clearly $g$ is monic; being epic as well, it must be iso (\cite{mclarty} theorem 13.1).
$\Leftarrow$: Take an arbitrary $g: B \rightarrow C$ with $ge$ monic. Factorize $g = pq$ with $p: X \rightarrow C$, $q: B \rightarrow X$ epic. Then $ge = pqe$ is monic. Since $p$ is monic, $qe$ is monic, forcing $q$ to be monic and hence iso. This shows that $g = pq$ is monic. \qed
\end{proof}

\section{Injective hulls in $\mathcal{E}$}

\begin{theorem} \label{inj:noesse}
An object $A \in \mathcal{E}$ is injective iff every essential extension of A is iso. 
\end{theorem}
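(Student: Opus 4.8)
The plan is to prove the two implications separately; the forward direction is immediate, while the reverse direction is where the local finiteness of $\mathcal{E}$ does the real work.

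For the forward direction, suppose $A$ is injective and let $e : A \to B$ be an essential extension. Since $A$ is injective and $e$ is monic, the identity $1_A$ extends along $e$ to some $r : B \to A$ with $re = 1_A$. As $re = 1_A$ is monic and $e$ is essential, $r$ is monic; but $r$ is also a retraction and hence epic, so $r$ is iso. From $re = 1_A$ it then follows that $e = r^{-1}$, so $e$ is iso, as required.

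For the reverse direction, assume every essential extension of $A$ is iso. First embed $A$ into an injective object, taking the singleton map $\{\} : A \to \Omega^A$ as a monic $\iota : A \to I$ with $I$ injective. I would then look among all epis $q : I \to Q$ for which $q\iota$ is still monic; by Lemma \ref{finite:images} there are only finitely many isomorphism classes of such quotients, so ordering them by further quotienting I can choose one, $q : I \to Q$, that is maximal (alternatively this follows from Theorem \ref{seq:epic}, since any ascending chain of such quotients must stabilise). The key claim is then that $q\iota : A \to Q$ is an essential extension. To see this I use the epic characterisation of Lemma \ref{ess:epi}: given an epi $g : Q \to C$ with $g q \iota$ monic, the composite $gq : I \to C$ is again an epi keeping $\iota$ monic and lying above $q$ in the quotient ordering, so maximality of $q$ forces $g$ to be iso.

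Granting the claim, the hypothesis applies: $q\iota : A \to Q$ is essential, hence iso. Writing $\phi$ for its inverse, the composite $r = \phi q : I \to A$ satisfies $r\iota = \phi q \iota = 1_A$, so $\iota$ is split and $A$ is a retract of the injective object $I$. Since any retract of an injective is injective, $A$ is injective. The main obstacle is this reverse direction, and specifically justifying the maximal $\iota$-preserving quotient $q$ and verifying its essentiality; this is exactly the step that would fail in a general topos and is rescued here by local finiteness, via Lemma \ref{finite:images} and Lemma \ref{ess:epi} together with Theorem \ref{seq:epic}.
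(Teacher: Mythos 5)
Your proof is correct, but it takes a genuinely different route from the paper's in both directions. For the forward implication, after producing the monic retraction $r$ with $re = 1_A$, you finish by noting that $r$ is monic and split epic, hence iso because a topos is balanced, and read off $e = r^{-1}$; the paper instead invokes its Schr\"oder--Bernstein theorem (it actually cites Theorem \ref{seq:epic} at that point, apparently a slip for Theorem \ref{schro:ben}), so your version is more elementary and sidesteps that mis-reference. For the reverse implication the paper argues by contradiction: if the monic $e: A \rightarrow E_0$ into an injective had no retraction it could not be essential, so Lemma \ref{ess:epi} yields a proper epi $q$ with $qe$ monic; since $qe$ again has no retraction the step iterates, producing an infinite chain of proper epis contradicting Theorem \ref{seq:epic}. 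You deploy the same two ingredients in the opposite, direct order: use chain stabilization to choose a quotient $q$ of $I$ maximal among those keeping $\iota$ monic, observe that this maximality is word-for-word the criterion of Lemma \ref{ess:epi} applied to $q\iota$, so $q\iota$ is essential, and then apply the hypothesis once to split $\iota$. Your route buys a positive intermediate fact of independent interest---every object $A$ admits an essential extension $q\iota: A \rightarrow Q$ realized as a maximal quotient of any injective extension---which is the epi-side mirror of the paper's later construction of the hull as a maximal essential subextension inside $I$; the paper's contradiction argument is marginally shorter but produces no such byproduct. One small caution: Lemma \ref{finite:images} counts isomorphism classes of codomains of epis, not quotients of $I$ as elements of the poset you order, so finiteness of that poset is not quite an immediate quotation of the lemma; however, your parenthetical alternative justification via stabilization of ascending chains (Theorem \ref{seq:epic}) is exactly right and closes that gap.
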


\begin{proof}
$\Rightarrow$: Let $e: A \rightarrow B$ be essential. Then by injectivity, the identity map $1_A: A \rightarrow A$ can be extended along $e$ to an arrow $f: B \rightarrow A$, i.e. $fe = 1_A$. But then $fe$ is monic, so invoking the fact that $e$ is essential, $f$ is monic. We now have monics $e$, $f$ in each direction between $A$ and $B$. Invoking Theorem $\ref{seq:epic}$, in particular, $e$ is iso.

$\Leftarrow$: 
We show that any monic $e: A \rightarrow E_0$ can be retracted onto A. This will prove the result because we can take $E_0$ to be injective.

The proof is by contradiction. Suppose $e$ has no left inverse. Then in particular it is not iso, hence not an essential extension. 

Applying Lemma \ref{ess:epi}, we can therefore find a proper epic (i.e. non-iso) $q: E_0 \rightarrow E_1$ with $qe$ monic. 

If $qe$ were iso, we could now construct a left inverse to $e$, contrary to hypothesis. The argument of the last paragraph can therefore be repeated with $qe$ in place of $e$, yielding a sequence of proper epics $E_0 \rightarrow E_1 \rightarrow E_2$ whose composite with $e$ is monic. Continuing in this way, we find an infinite sequence of proper epics $E_0 \rightarrow E_1 \rightarrow \cdots$, contradicting Theorem $\ref{seq:epic}$. \qed
\end{proof}

\begin{theorem} [Existence and uniqueness of injective hulls]
For any $A \in \mathcal{E}$,
\begin{enumerate}
  \item There is an essential extension $e: A \rightarrow E$ with $E$ injective.
  \item For any other essential extension $f: A \rightarrow F$ with $F$ injective, there is an isomorphism $h: E \cong F$ with $f = he$.
\end{enumerate}
\end{theorem}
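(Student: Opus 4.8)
The plan is to prove existence by constructing a \emph{maximal} essential extension inside a fixed injective object, and to prove uniqueness by extending one hull along the other and invoking Theorem~\ref{inj:noesse}.

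For existence, first I would embed $A$ into an injective object, taking the singleton map $m : A \rightarrow I$ with $I = \Omega^A$. I would then consider the collection $\mathcal{S}$ of subobjects $S$ with $A \subseteq S \subseteq I$ for which the induced inclusion $A \rightarrow S$ is essential. This collection is nonempty (it contains $A$ itself) and, crucially, \emph{finite}: subobjects of $I$ correspond to characteristic arrows $I \rightarrow \Omega$, of which there are only finitely many by local finiteness. Hence $\mathcal{S}$ has a maximal element $E$, and $e : A \rightarrow E$ is by construction an essential extension. (Alternatively one could extract $E$ from the ascending-chain theorem for subobjects of a fixed object.)

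The crux is to show this maximal $E$ is injective, for which, by Theorem~\ref{inj:noesse}, it suffices to show every essential extension $g : E \rightarrow F$ is iso. Here I would exploit that $I$ is injective: the inclusion $j : E \rightarrow I$ extends along $g$ to some $k : F \rightarrow I$ with $kg = j$. Since $kg = j$ is monic and $g$ is essential, $k$ is monic, so $k$ has an image $\hat{F}$ with $E \subseteq \hat{F} \subseteq I$. The composite $A \rightarrow E \rightarrow F$ is essential (both factors are), and composing with the isomorphism $F \cong \hat{F}$ induced by $k$ shows $A \rightarrow \hat{F}$ is essential; hence $\hat{F} \in \mathcal{S}$, and maximality forces $\hat{F} = E$. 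Thus $k$ corestricts to an isomorphism $k' : F \rightarrow E$, and cancelling the monic $j$ in $j k' g = kg = j$ yields $k' g = 1_E$. Since $k'$ is iso, so is $g$, as required; therefore $E$ is injective.

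For uniqueness, given another essential extension $f : A \rightarrow F$ with $F$ injective, I would use injectivity of $F$ to extend $f$ along $e$, obtaining $h : E \rightarrow F$ with $he = f$. Since $he = f$ is monic and $e$ is essential, $h$ is monic; moreover, because the composite $he = f$ is essential, $h$ itself is essential (if $gh$ is monic then $g(he) = (gh)e$ is monic, so $g$ is monic by essentiality of $f$). But $E$ is injective, so by Theorem~\ref{inj:noesse} the essential extension $h$ must be iso, giving the required $h : E \cong F$ with $f = he$. I expect the main obstacle to be the injectivity step in existence: recognising that maximality of $E$ as a sub-extension of the injective $I$ is precisely what collapses an arbitrary essential extension of $E$ back onto $E$. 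The small observation that a composite $he$ being essential forces the outer factor $h$ to be essential is routine, but since it is the pivot of the uniqueness argument I would state it explicitly.
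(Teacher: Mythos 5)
Your proposal is correct, and for existence it is essentially the paper's own argument: embed $A$ in an injective $I$, take a maximal essential sub-extension $E$ of $A$ inside $I$ (finiteness of the subobject lattice of $I$ following from finiteness of $\mathrm{Hom}(I,\Omega)$), and use injectivity of $I$ to pull any essential extension $g: E \rightarrow F$ back into $I$, where maximality collapses it onto $E$. If anything your write-up is slightly sharper than the paper's: the paper argues by contradiction that the re-embedded extension would be a ``strictly larger'' essential extension of $A$ within $I$, leaving implicit why equality of subobjects forces $g$ itself to be iso, whereas your cancellation step $jk'g = j \Rightarrow k'g = 1_E$ makes that explicit. Where you genuinely diverge is uniqueness. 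The paper uses injectivity of \emph{both} $E$ and $F$ to produce monics $h: E \rightarrow F$ and $k: F \rightarrow E$ in the two directions, and then invokes the locally finite Schr\"oder--Bernstein machinery (Lemma~\ref{endo:monic}, Theorem~\ref{schro:ben}) to conclude both are iso. You instead prove the small lemma that the outer factor $h$ of the essential monic $f = he$ is itself essential (which is correct: if $gh$ is monic then $gf = (gh)e$ is monic, so $g$ is monic by essentiality of $f$), and then apply Theorem~\ref{inj:noesse} to the injective $E$ to conclude that the essential extension $h$ is iso. Your route constructs only one comparison map and derives uniqueness formally from the characterization theorem, so it works verbatim in any setting where Theorem~\ref{inj:noesse} and enough injectives are available; the paper's route is more symmetric and reuses its finiteness results directly rather than through Theorem~\ref{inj:noesse}. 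Both are sound, and your instinct to state the ``outer factor is essential'' observation explicitly is right, since it is the pivot of your version of the argument.
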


\begin{proof}
\begin{enumerate}
  \item Find a monic $A \rightarrow I$ with $I$ injective. Up to isomorphism, there are only finitely many subobjects of $I$ which are essential extensions of $A$; we can therefore find a maximal one $E$, say $A \rightarrow E \rightarrow I$. We show that $E$ is injective.
  If not, by theorem \ref{inj:noesse} there is a nontrivial essential extension $e: E \rightarrow G$. Since $I$ is injective, we can extend the embedding $E \rightarrow I$ to $G$, and the resulting extension is monic, i.e. we have expressed $G$ as a subobject of $I$. But then $A \rightarrow E \rightarrow G$ is a strictly larger essential extension of $A$ within $I$, a contradiction. \qed
  \item Because $F$ is injective, we can find a map $h: E \rightarrow F$ with $f = he$. Because $f$ is monic and $e$ essential, $h$ must be monic.
  Similarly we can find a monic $k: F \rightarrow E$. But now, by theorem \ref{endo:monic}, $h$ and $k$ must both be isomorphisms. \qed
\end{enumerate}
\end{proof}

%
%


\begin{thebibliography}{5}
%
\bibitem {moer:mac}
Moerdijk, I., MacLane, S.:
Sheaves in Geometry and Logic: a First Introduction to Topos Theory.
Springer Verlag (1992)
%
\bibitem {mclarty}
McLarty, C.:
Elementary Categories, Elementary Toposes.
Oxford University Press (1995)
%
\end{thebibliography}
\end{document}